\documentclass{article}
\usepackage{amsmath,amssymb,amsthm,mathrsfs,microtype,cite}
\usepackage{tikz}
\usetikzlibrary{calc}

\newcommand{\CC}{\mathbb{C}}
\newcommand{\QQ}{\mathbb{Q}}
\newcommand{\ZZ}{\mathbb{Z}}

\newcommand{\FF}{\mathbb{F}}
\newcommand{\PP}{\mathbb{P}}

\newtheorem{theorem}{Theorem}

\DeclareMathOperator{\tr}{tr}
\DeclareMathOperator{\GL}{GL}
\DeclareMathOperator{\SL}{SL}
\DeclareMathOperator{\PGL}{PGL}

\title{A rank 18 Waring decomposition of $sM_{\langle 3\rangle}$ with 432
symmetries}
\author{Austin Conner}
\date{November 14, 2017}

\begin{document}
\maketitle
\begin{abstract}
  The recent discovery that the exponent of matrix multiplication is determined
  by the rank of the symmetrized matrix multiplication tensor has invigorated
  interest in better understanding symmetrized matrix multiplication
  \cite{poly17}.  I present an explicit rank 18 Waring decomposition of
  $sM_{\langle 3\rangle}$ and describe its symmetry group.
\end{abstract}

Determining the complexity of matrix multiplication has been a central problem
ever since Strassen showed, in 1969, that one can multiply a pair of $n\times n$
matrices using only $O(n^{2.81})$ arithmetic operations \cite{str69}.  Strassen
defined the \emph{exponent of matrix multiplication} $\omega = \inf \,\{ \tau
\mid {}$matrix multiplication requires $O(n^\tau)$ arithmetic operations$\}$,
and over the following decades a sequence of results has shown $\omega < 2.3729$
\cite{bcrl79,sch81,str87,cw90,sto10,wil11,lg14}. In 2014, however, it was
demonstrated that the only method proving new bounds since 1989, Strassen's
laser method applied to the Coppersmith-Winograd tensor, cannot prove an upper
bound on $\omega$ better than $2.3$ \cite{aflg15}.

It is necessary then to pursue other methods to make further progress.  Strassen
showed that $\omega = \inf \, \{ \tau \mid R(M_{\langle n\rangle}) = O(n^\tau)
\}$, where $M_{\langle n\rangle} \in \smash{\CC^{n^2}}\otimes
\smash{\CC^{n^2}}\otimes \smash{\CC^{n^2}}$ denotes the structure tensor of the
$n\times n$ matrix algebra and $R(M_{\langle n\rangle})$ its tensor rank
\cite{str69}.  One new idea then is to exploit the recent result that this
latter quantity is furthermore equal to $\inf \{ \tau \mid R_s(sM_{\langle
n\rangle}) = O(n^\tau)\}$, where $sM_{\langle n\rangle}$ denotes the result of
symmetrizing $M_{\langle n\rangle}$ and $R_s(sM_{\langle n\rangle})$ its Waring
rank, the smallest $r$ such that $sM_{\langle n\rangle}$ may be written as the
sum of $r$ cubes  \cite{poly17}.  I present a Waring decomposition of
$sM_{\langle 3\rangle}$ and describe its particularly large symmetry group,
which I hope will lead to generalizations to larger $n$.

\pagebreak[1]

Write $V = \CC^{n}$, and define $sM_{\langle n\rangle} \in S^3(V^\ast \otimes
V)$ as the tensor corresponding to the symmetric multilinear map $(A,B,C)
\mapsto \smash{\frac{1}{2}} (\tr(ABC) + \tr(ACB))$.  Consider the 18 matrices
$m_1,\ldots,m_{18}$ below, where $\zeta=e^{2\pi i/3}$ and $a=-2^{-1/3}$.
\nopagebreak
\begin{gather*}
\begin{pmatrix}
1 & -1 & 0 \\
-1 & 1 & 0 \\
0 & 0 & 0
\end{pmatrix} \quad
  \begin{pmatrix}
0 & 0 & 0 \\
0 & 1 & -\zeta \\
0 & -\zeta^{2} & 1
  \end{pmatrix}
  \quad
 \begin{pmatrix}
1 & 0 & -\zeta \\
0 & 0 & 0 \\
-\zeta^{2} & 0 & 1
\end{pmatrix}\\
\begin{pmatrix}
0 & 0 & 0 \\
0 & 1 & -\zeta^{2} \\
0 & -\zeta & 1
\end{pmatrix} \quad
 \begin{pmatrix}
1 & 0 & -1 \\
0 & 0 & 0 \\
-1 & 0 & 1
\end{pmatrix} \quad
 \begin{pmatrix}
1 & -\zeta & 0 \\
-\zeta^{2} & 1 & 0 \\
0 & 0 & 0
\end{pmatrix}\\
\begin{pmatrix}
1 & 0 & -\zeta^{2} \\
0 & 0 & 0 \\
-\zeta & 0 & 1
\end{pmatrix} \quad
 \begin{pmatrix}
1 & -\zeta^{2} & 0 \\
-\zeta & 1 & 0 \\
0 & 0 & 0
\end{pmatrix} \quad
 \begin{pmatrix}
0 & 0 & 0 \\
0 & 1 & -1 \\
0 & -1 & 1
\end{pmatrix}\\ \\
\begin{pmatrix}
a & 0 & 0 \\
0 & a & 0 \\
0 & 0 & a
\end{pmatrix} \quad
 \begin{pmatrix}
0 & 1 & 0 \\
0 & 0 & \zeta \\
\zeta^{2} & 0 & 0
\end{pmatrix} \quad
 \begin{pmatrix}
0 & 0 & 1 \\
\zeta^{2} & 0 & 0 \\
0 & \zeta & 0
\end{pmatrix}\\
\begin{pmatrix}
0 & 1 & 0 \\
0 & 0 & \zeta^{2} \\
\zeta & 0 & 0
\end{pmatrix} \quad
 \begin{pmatrix}
0 & 0 & 1 \\
1 & 0 & 0 \\
0 & 1 & 0
\end{pmatrix} \quad
 \begin{pmatrix}
1 & 0 & 0 \\
0 & \zeta & 0 \\
0 & 0 & \zeta^{2}
\end{pmatrix}\\
\begin{pmatrix}
0 & 0 & 1 \\
\zeta & 0 & 0 \\
0 & \zeta^{2} & 0
\end{pmatrix} \quad
 \begin{pmatrix}
1 & 0 & 0 \\
0 & \zeta^{2} & 0 \\
0 & 0 & \zeta
\end{pmatrix} \quad
 \begin{pmatrix}
0 & 1 & 0 \\
0 & 0 & 1 \\
1 & 0 & 0
\end{pmatrix}
\end{gather*}
\nopagebreak
\begin{theorem}
  $sM_{\langle 3\rangle} = \frac{1}{6} \sum_{i=1}^{18} m_i^{(3)}$. That is, the
  $m_i$ form a rank 18 Waring decomposition of $6sM_{\langle 3\rangle }$.
\end{theorem}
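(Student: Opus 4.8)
The plan is to turn the statement into a single polynomial identity and then match coefficients, using the symmetry of the configuration to cut the bookkeeping down to a handful of cases. First I would unwind the identifications. Writing $U=V^*\otimes V$ and using the trace form $\langle X,Y\rangle=\tr(XY)$ to identify $U$ with $U^*$, the tensor $sM_{\langle 3\rangle}\in S^3U$ is the one whose associated cubic function is $A\mapsto sM_{\langle 3\rangle}(A,A,A)=\tr(A^3)$, while $m_i^{(3)}$ corresponds to $A\mapsto\tr(m_iA)^3$. Since a symmetric tensor in $S^3U$ is determined by the polynomial function it defines, the theorem is equivalent to the identity
\[
  6\,\tr(A^3)\;=\;\sum_{i=1}^{18}\tr(m_iA)^3
\]
in $\CC[a_{jk}:1\le j,k\le 3]$, where $A=(a_{jk})$. (Pairing $A$ with $m_i$ through $\tr(Am_i^{\mathsf T})$ instead would change nothing, since the eighteen lines $\CC m_i$ are permuted by transposition up to cube roots of unity, so the right-hand side is transpose-invariant.)

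Next I would split the matrices into the two families visible in the display. The first nine are, for each $2$-element subset $\{p,q\}\subset\{1,2,3\}$ and each cube root of unity $\lambda$, the rank-one matrix $(e_p-\lambda^2 e_q)(e_p-\lambda e_q)^{\mathsf T}$, for which $\tr(m_iA)=(a_{pp}+a_{qq})-\lambda^2 a_{pq}-\lambda a_{qp}$; summing the cube over the three values of $\lambda$ by means of $\sum_{\lambda^3=1}(s-\lambda^2u-\lambda t)^3=3s^3-3u^3-3t^3+18\,sut$, and then over the three blocks, the first family contributes
\[
  6\sum_p a_{pp}^3+9\sum_{p\ne q}a_{pp}^2a_{qq}-3\sum_{p\ne q}a_{pq}^3+18\sum_{p<q}(a_{pp}+a_{qq})a_{pq}a_{qp}.
\]
The last nine are the scalar $aI$, the two nonscalar diagonal matrices $\mathrm{diag}(1,\zeta,\zeta^2)$ and $\mathrm{diag}(1,\zeta^2,\zeta)$, and six generalized permutation matrices lying over the two $3$-cycles. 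The three diagonal ones are summed via $\sum_{\omega^3=1}(x+\omega y+\omega^2 z)^3=3(x^3+y^3+z^3)+18xyz$, and the value of $a$ is precisely what is needed so that, together with the $9\sum a_{pp}^2a_{qq}$ above, all purely diagonal monomials combine to $6\sum a_{pp}^3$; the six generalized permutations have no diagonal entry and so affect only off-diagonal monomials.

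Finally I would match coefficients, organized by the weight of a cubic monomial under conjugation by the diagonal torus: (i) the diagonal cubes $a_{pp}^3$, settled above; (ii) the monomials $a_{pp}^2a_{qq}$ and $a_{11}a_{22}a_{33}$, which must vanish and do so by the choice of $a$; (iii) the monomials $a_{pp}a_{pq}a_{qp}$, to which only the first family contributes, with coefficient $18$; (iv) the two $3$-cycle monomials $a_{12}a_{23}a_{31}$ and $a_{13}a_{32}a_{21}$, to which only three of the generalized permutations contribute, each giving $6$, for total $18$; and (v) the remaining off-diagonal monomials ($a_{pq}^3$, $a_{pq}^2a_{qp}$, $a_{pq}^2a_{qr}$, and so on), which must sum to zero — the honestly cubic ones like $a_{pq}^3$ cancel between the two families ($-3$ against $+3$), and the rest either get no contribution at all or cancel inside the generalized permutations via $1+\zeta+\zeta^2=0$. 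By the $S_3$ symmetry permuting $\{1,2,3\}$, which fixes both sides, one representative of each weight class suffices, so the check is short. The only real obstacle is clerical, keeping signs and powers of $\zeta$ straight across eighteen matrices; it can be sidestepped by simply expanding both sides and comparing all $\binom{11}{3}=165$ coefficients by machine, the symmetry analysis above being what makes a verification by hand pleasant.
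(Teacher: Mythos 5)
Your reduction to the polynomial identity $6\tr(A^3)=\sum_{i=1}^{18}\tr(m_iA)^3$ is the right move --- the paper offers no proof of this theorem at all, so direct verification is the only game in town --- and most of your bookkeeping is correct: the first nine matrices are indeed $(e_p-\lambda^2e_q)(e_p-\lambda e_q)^{\mathsf T}$, your two summation identities over cube roots of unity check out, and the contribution you compute for the first family is right. The gap is the one step you wave through instead of computing: the claim that ``the value of $a$ is precisely what is needed'' for the purely diagonal monomials. Carry it out. The coefficient of $a_{11}^2a_{22}$ on the right-hand side is $9$ from the first family plus $3a^3+3\zeta+3\zeta^2=3a^3-3$ from the three diagonal matrices, i.e.\ $3a^3+6$, and it must vanish; the coefficient of $a_{11}^3$ is $6+a^3+1+1$ and must equal $6$; the coefficient of $a_{11}a_{22}a_{33}$ is $6a^3+12$ and must vanish. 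All three force $a^3=-2$, i.e.\ $a=-2^{1/3}$ up to a cube root of unity. But the paper sets $a=-2^{-1/3}$, so $a^3=-\tfrac12$, and the identity as printed fails: evaluating at the matrix unit $E_{11}$ gives $\sum_i\bigl((m_i)_{11}\bigr)^3=6+a^3+1+1=\tfrac{15}{2}$, not $6$.

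So the single numerical coincidence on which the theorem hinges is exactly the one your proof does not verify, and with the constants as stated it is false. Either the exponent of $2$ in the definition of $a$ carries a sign error (in which case your argument, completed with the computation above, proves the corrected statement), or your write-up certifies a false identity; you cannot tell which without doing the check, which is the point --- in a verification-by-expansion proof the symmetry analysis buys you nothing on the one coefficient that is not forced by symmetry. The remaining weight classes (iii)--(v) in your list do cancel exactly as you describe, so fixing this one step completes the proof.
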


The group $\Gamma = \GL(V^\ast \otimes V)$ naturally acts on $S^3(V^\ast \otimes
V)$, and the stabilizer of $sM_{\langle n\rangle}$ is $\Gamma_{sM_{\langle
n\rangle}} = \PGL(V) \rtimes \ZZ_2$ \cite{ges}. Here the action of $\PGL(V)$ is
induced by its natural action on $V^\ast \otimes V$, and, after choosing a basis
and its dual, $\ZZ_2$ acts as matrix transposition. Such a choice of matrix
transposition is not canonical, but any choice generates the same group modulo
$\PGL(V)$.
 
Notice that any $m_i$ could be replaced by $\zeta m_i$ as these matrices define
the same rank 1 tensor.  To study symmetry, we wish to consider the $m_i$ modulo
this identification. Therefore, write $T_i = \smash{m_i^{(3)}}$, the rank one
symmetric tensors corresponding to the $m_i$, and define the \emph{symmetry of
the decomposition} as the subgroup $\Gamma_S$ of $\Gamma_{sM_{\langle
n\rangle}}$ which leaves the set $S=\{T_1,\ldots,T_{18}\}$ invariant under the
natural induced action on subsets of $S^3(V^\ast \otimes V)$. A symmetry of the
decomposition preserves the set $\{m_1,\ldots,m_{18}\}$ up to powers of $\zeta$.

\begin{theorem} \label{sym}
  The symmetry group $\Gamma_S \cong (\ZZ_3^2\rtimes \SL(2,\FF_3)) \rtimes
  \ZZ_2$, which has order 432.
\end{theorem}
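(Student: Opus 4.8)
The plan is to identify $\Gamma_S$ with the stabilizer of the Hesse configuration inside $\Gamma_{sM_{\langle 3\rangle}}$, by first decoding the list $m_1,\dots,m_{18}$ and then squeezing $\Gamma_S$ between two copies of a group of order $432$. I would first record three structural facts. (a) $m_{10}$ is scalar, and $m_{10},\dots,m_{18}$ are, up to scalars, the nine elements of the Heisenberg--Weyl group $H=\langle X,Z\rangle\subset\GL(V)$ generated by the cyclic shift $X=m_{18}$ and the clock $Z=\operatorname{diag}(1,\zeta,\zeta^{2})=m_{15}$; the eight noncentral ones each have eigenvalues $1,\zeta,\zeta^{2}$. (b) Each of $m_1,\dots,m_9$ is a rank-one matrix $m_i=v_i\overline{v_i}^{\top}$, bar denoting entrywise conjugation, with $\overline{v_i}^{\top}v_i=2$, so that $m_i^{2}=2m_i$ and $m_i$ has eigenvalues $2,0,0$. (c) The nine lines $[v_i]\subset\PP(V)$ are precisely the inflection points $[1:{-}\eta:0]$, $[1:0:{-}\eta]$, $[0:1:{-}\eta]$ with $\eta^{3}=1$ of the Fermat cubic $x^{3}+y^{3}+z^{3}$ --- the Hesse configuration. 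The classical inputs are that the stabilizer of the Hesse configuration in $\PGL(V)$ is the Hessian group $G_{216}\cong\ZZ_3^{2}\rtimes\SL(2,\FF_3)$, which also equals the normalizer in $\PGL(V)$ of the order-$9$ image of $H$, and that $G_{216}$ is generated by the images of $X$, $Z$, the Fourier matrix $F=\tfrac{1}{\sqrt 3}(\zeta^{jk})$ and the phase $D=\operatorname{diag}(1,\zeta^{2},1)$ --- all unitary, so $G_{216}\subset\mathrm{PU}(V)$.

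For the upper bound, put $\Gamma_S^{+}=\Gamma_S\cap\PGL(V)$, of index at most $2$ in $\Gamma_S$. An element $\gamma\in\Gamma_S$ is a rank-preserving linear automorphism of $\operatorname{End}(V)$ preserving eigenvalue ratios, so it preserves the partition of $\{T_1,\dots,T_{18}\}$ into the nine rank-one tensors, the scalar tensor $T_{10}$, and the remaining eight. Writing $\gamma$ as conjugation by $g\in\PGL(V)$, or as transposition followed by such a conjugation, and reading off the column spaces of the images of $m_1,\dots,m_9$, one finds that $g$ (respectively $g^{-\top}$) permutes the nine lines $[v_i]$; since that set is stable under entrywise conjugation, in either case $g$ stabilizes the Hesse configuration, hence lies in $G_{216}$ --- note $G_{216}$ is closed under $g\mapsto g^{-\top}$, as that operation preserves $H$ setwise. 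Thus $\gamma\mapsto g$ injects each of the (at most) two cosets of $\Gamma_S^{+}$ into $G_{216}$, giving $|\Gamma_S|\le 2\cdot 216=432$.

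For the lower bound I would check that every $g\in G_{216}$ gives a symmetry. By unitarity, $g(v\overline{v}^{\top})g^{-1}=(gv)\overline{(gv)}^{\top}$, so conjugation by $g$ carries $m_i$ to a scalar multiple of $m_{\rho(i)}$, where $[gv_i]=[v_{\rho(i)}]$; since conjugation preserves $Y^{2}=2Y$, that scalar must be $1$, so group A is permuted exactly. Because $G_{216}$ normalizes the image of $H$, conjugation by $g$ permutes the eight Pauli lines, the scalars involved being cube roots of unity --- the one place the oddness of $3$ is used --- which disappear on passing to the $T_i$; and $T_{10}$ is fixed. Hence $\operatorname{conj}_g\in\Gamma_S^{+}$ for all $g\in G_{216}$, so with the upper bound $\Gamma_S^{+}\cong G_{216}$. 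Finally, transposition fixes $m_1,m_5,m_9$, interchanges $m_2$ with $m_4$, $m_3$ with $m_7$, and $m_6$ with $m_8$, fixes $m_{10}$, and permutes the Pauli lines, so it lies in $\Gamma_S$; it is not a conjugation, and $\Gamma_S^{+}$ is normal of index exactly $2$, whence $\Gamma_S=\Gamma_S^{+}\rtimes\ZZ_2\cong(\ZZ_3^{2}\rtimes\SL(2,\FF_3))\rtimes\ZZ_2$, of order $432$.

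I expect fact (c) --- recognizing the left vectors of $m_1,\dots,m_9$ as the Hesse configuration and their right vectors as the entrywise conjugates --- to be the real crux; once it is in hand, the upper bound is just the classical determination of the symmetry group of the Hesse configuration, and the lower bound runs on the unitarity of $G_{216}$ together with the $m_i^{2}=2m_i$ normalization. The only remaining delicate point is verifying that conjugating a Heisenberg matrix by a Clifford element introduces only cube-root-of-unity scalars, so that the tensors $T_i$ --- not merely the lines $\langle m_i\rangle$ --- are genuinely permuted.
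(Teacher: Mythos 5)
Your proposal is correct, and it shares the paper's central idea---the rank\nobreakdash-one block pins down a Hesse configuration in $\PP(V)$, and $\Gamma_S$ gets sandwiched between two copies of $(\ZZ_3^2\rtimes\SL(2,\FF_3))\rtimes\ZZ_2$---but both halves of the sandwich are executed quite differently. For the upper bound, the paper does not cite the classical determination of the Hessian group: it proves it, by showing the \emph{combinatorial} automorphism group of the $9$-point, $12$-line configuration is $\ZZ_3^2\rtimes\GL(2,\FF_3)$ of order $432$ (via the count $9\cdot8\cdot7=504$ and the subgroup of order $432$), and then discarding the determinant-$(-1)$ half by exhibiting one explicit element of $\PGL(V)$, determined by a frame of four points in general position, that fails to preserve the configuration; your appeal to the classical fact that the projective stabilizer is $G_{216}$ replaces exactly those two paragraphs, at the cost of an external reference (Jordan/Maschke, or Artebani--Dolgachev) and of the injectivity-via-a-frame argument the paper uses to pass from configuration symmetries back to $\PGL(V)$. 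For the lower bound, the paper simply writes down four generators of $\rho(\ZZ_3^2\rtimes\SL(2,\FF_3))$ in $\PGL(V)$ and asserts they "can be observed to act as claimed"; your argument is more structural and, to my mind, more explanatory: unitarity of the Hessian group together with the normalization $m_i=v_i\overline{v_i}^{\top}$, $\|v_i\|^2=2$ forces the rank-one tensors to be permuted on the nose, and the determinant-$1$/normalizer argument shows the Heisenberg block is permuted up to cube roots of unity, which die upon cubing. The one point where your writeup is slightly loose is the transposed coset in the upper bound (the column space of $gm^{\top}g^{-1}$ is $[g\overline{v}]$, so you need, as you note, closure of the configuration under entrywise conjugation before concluding $g\in G_{216}$), and, as you correctly flag, fact (c) and the identification of $m_{10},\dots,m_{18}$ with Heisenberg coset representatives are finite verifications that must actually be carried out---they play the role of the paper's unproved "can be observed" step. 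Net effect: your route is shorter and more conceptual but imports classical theorems the paper deliberately makes self-contained.
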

\nopagebreak
The expression in parentheses is the $\PGL(V)$ action, and the $\ZZ_2$ is
generated by matrix transposition with respect to the basis of the
decomposition. To describe the $\PGL(V)$ part of the action, we label each
$3\times 3$ block of matrices with elements of the vector space $\FF_3^2$ as
follows:
\begin{align*}
  &(0,0) \quad (0,1) \quad (0,2) \\
  &(1,0) \quad (1,1) \quad (1,2) \\
  &(2,0) \quad (2,1) \quad (2,2).
\end{align*}
Then $\ZZ_3^2\rtimes \SL(2,\FF_3)$ acts on the first $3\times 3$ block as affine
transformations of $\FF_3^2$ according to this labelling: $\ZZ_3^2$ acts by
translation and $\SL(2,\FF_3)$ acts by linear transformation.  On the second
$3\times 3$ block, $\ZZ_3^2$ acts trivially and $\SL(2,\FF_3)$ again acts as
linear transformations. One can alternatively view the action of $\ZZ_3^2\rtimes
\SL(2,\FF_3)$ on the second $3\times 3$ block as equivalent to the action on its
normal subgroup $\ZZ_3^2$ by conjugation.

The decomposition is also closed under complex conjugation, which acts by
transposing each $3\times 3$ block.  A Galois-type symmetry like this is not in
general in $\Gamma$ and represents another kind of symmetry of decompositions of
tensors defined over $\QQ$. There are no other nontrivial Galois symmetries for
this decomposition, for any such symmetry must be an automorphism of
$\QQ[\zeta]$ fixing $\QQ$. Including complex conjugation as a symmetry of the
decomposition yields a group of order 864.

\begin{proof}[Proof of Theorem \ref{sym}]
  We first describe the representation $\rho : \ZZ_3^2 \rtimes \SL(2,\FF_3) \to
  \PGL(V)$ explicitly by giving the images of a generating set. These elements
  of $\PGL(V)$ can then be observed to act as claimed on the $3 \times 3$
  blocks.  Let $e_r$ and $e_d$ denote the generators of $\ZZ_3^2$ corresponding
  to translation right and down, respectively, and denote elements of
  $\SL(2,\FF_3)$ by their matrices with respect to the standard basis of
  $\FF_3^2$.  Then
\begin{gather*}
  \rho(e_r) = \begin{pmatrix}
0 & 0 & 1 \\
\zeta^2 & 0 & 0 \\
0 & \zeta & 0
\end{pmatrix} \quad
  \rho(e_d) = \begin{pmatrix}
0 & 0 & 1 \\
\zeta & 0 & 0 \\
0 & \zeta^2 & 0
\end{pmatrix} \\
\rho\begin{pmatrix}
  1 & 1\\
  0 & 1
\end{pmatrix} =
\begin{pmatrix}
-\zeta + 1 & \zeta^{2} - 1 & 2 \zeta + 1 \\
\zeta^{2} - 1 & -\zeta + 1 & 2 \zeta + 1 \\
-\zeta + 1 & -\zeta + 1 & -\zeta + 1
\end{pmatrix} \\
\rho\begin{pmatrix}
  0 & 1\\
  -1 & 0
\end{pmatrix} =
\begin{pmatrix}
-\zeta^{2} + 1 & \zeta - 1 & -\zeta^{2} + 1 \\
\zeta - 1 & -\zeta^{2} + 1 & -\zeta^{2} + 1 \\
\zeta - 1 & \zeta - 1 & -2 \zeta - 1
\end{pmatrix} \\
\rho\begin{pmatrix}
  0 & 1 \\
  -1 & -1
\end{pmatrix} =
\begin{pmatrix}
1 & 0 & 0 \\
0 & 1 & 0 \\
0 & 0 & \zeta^{2}
\end{pmatrix}.
\end{gather*}

It remains to show there are no symmetries of the decomposition other than those
claimed.  Name the entries of a $3\times 3$ block by the numbers $1,\ldots,
9$, like on a telephone.  Since all symmetries of $\Gamma_{sM_{\langle
3\rangle}}$ preserve matrix rank, we first observe that any symmetry of the
decomposition must preserve in particular the first $3\times 3$ block.  This,
combined with the fact that there is evidently a matrix transposition in
$\Gamma_S$, shows it is sufficient to check the set of $\PGL(V)$ symmetries of
the first $3\times 3$ block is as claimed.  Call this group $G$. We wish to
show $G= \ZZ_3^2 \rtimes \SL(2,\FF_3)$.  The first block consists of only rank
1 matrices, so they uniquely determine column vectors up to multiplication by
scalars.  Let $H$ denote the symmetry group of the corresponding projective
configuration of points in $\PP^2$.  The vectors corresponding to matrices
$(1,3,4,6)$ are in general linear position, so each element of $H$ determines
at most one element of $\PGL(V)$ which induces it.  Hence, the natural
homomorphism $G\to H$ is injective, so it suffices to show $H \le \ZZ_3^2
\rtimes \SL(2,\FF_3)$.

First we show $\ZZ_3^2 \rtimes \GL(2,\FF_3)$ is the symmetry group of the
combinatorial affine plane consisting of $9$ points and $12$ lines determined by
the points and colinearity relations of the configuration (Figure~\ref{hesse}).
Clearly $\ZZ_3^2$ are symmetries of this configuration. To see that
$\GL(2,\FF_3)$ are also symmetries, notice that we may identify points of the
configuration with the group $\ZZ^2/3\ZZ^2$, and any line through points in the
lattice $\ZZ^2$ projects down to one of our 12 lines when modding out by
$3\ZZ^2$. Then since $\GL(2,\ZZ)$ preserves the lines of $\ZZ^2$, it must be
that $\GL(2,\FF_3)$ preserves the lines of our configuration $\ZZ^2/3\ZZ^2$, as
desired.  Observe that any symmetry is determined by the image of 3 points. For
instance, fixing the image of 1,2, and 5 determines by colinearity the image of
3,8, and 9, which in turn determines the image the remaining 3 points.  Then,
since $\ZZ_3^2 \rtimes \GL(2,\FF_3)$ are all indeed symmetries of the
configuration, the full symmetry group has order at most $9\cdot 8 \cdot 7 =
504$ and contains a subgroup of size $9\cdot 48 = 432$.  The only possibility is
then equality with $\ZZ_3^2\rtimes \GL(2,\FF_3)$, as claimed.

\begin{figure}[h] 
 \begin{center}
   \begin{tikzpicture}{thick}
     \def \eps {.35}
     \def \extend {0.4}
     \foreach \i in {-1,0,1}
     {
       \draw (-1-\eps,\i) -- (1+\eps,\i);
       \draw (\i,-1-\eps) -- (\i,1+\eps);
     }
     \draw ($(-1,-1)+\eps*(180+45:1)$) -- ($(1,1)+\eps*(45:1)$);
     \draw ($(-1,1)+\eps*(90+45:1)$) -- ($(1,-1)+\eps*(270+45:1)$);

     \draw ($(-1,0)+\eps*(90+45:1)$) -- ($(0,-1)+\extend*(270+45:1)$) arc
     (180+45:360+45:{3/4*sqrt(2)}) coordinate (a);
     \draw (a) -- ($(1,1)+\eps*(90+45:1)$);

     \draw ($(0,-1)+\eps*(180+45:1)$) -- ($(1,0)+\extend*(45:1)$) arc
     (-45:180-45:{3/4*sqrt(2)}) coordinate (b);
     \draw (b) -- ($(-1,1)+\eps*(180+45:1)$);

     \draw ($(1,0)+\eps*(-45:1)$) -- ($(0,1)+\extend*(90+45:1)$) arc
     (45:180+45:{3/4*sqrt(2)}) coordinate (b);
     \draw (b) -- ($(-1,-1)+\eps*(270+45:1)$);

     \draw ($(0,1)+\eps*(45:1)$) -- ($(-1,0)+\extend*(180+45:1)$) arc
     (135:180+135:{3/4*sqrt(2)}) coordinate (b);
     \draw (b) -- ($(1,-1)+\eps*(45:1)$);

     \foreach \i in {-1,0,1} {\foreach \j in {-1,0,1} {
       \node[circle, draw, fill=black!50, inner sep=0pt, minimum width=4pt] at (\i,\j) {};
     }}
    
   \end{tikzpicture}
 \end{center}
  \caption{The configuration determined by column vectors of the rank one block.
  This is classically known as the Hesse configuration \cite{hesse}.}\label{hesse}
\end{figure}
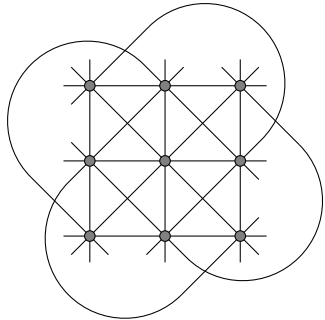

Now we show that the elements of $\ZZ_3^2 \rtimes \GL(2,\FF_3)$ where the second
factor has determinant $-1$ do not induce symmetries of the projective
configuration of points. Because $\ZZ_3^2 \rtimes \SL(2,\FF_3)$ does induce such
symmetries, it suffices to show the failure for only one element.  A convenient
choice is the map $\FF_3^2 \to \FF_3^2$ which interchanges coordinates. The
unique matrix taking the general frame $(1,2,7,8)$ to $(1,4,3,6)$ is
\[
\begin{pmatrix}
0 & -\zeta^{2} & -1 \\
-\zeta^{2} & 0 & -\zeta \\
0 & 0 & \zeta^{2}
\end{pmatrix},
\]
and one readily checks this matrix does not send, e.g. $3$ to any of the other
points. Hence $\ZZ_3^2\rtimes \SL(2,\FF_3) \le G \le H \le \ZZ_3^2\rtimes
\SL(2,\FF_3)$, and the full symmetry group is $(\ZZ_3^2\rtimes
\SL(2,\FF_3))\rtimes \ZZ_2$, as claimed.
\end{proof}

The rank one block of the decomposition consists of orthogonal projections onto
one dimensional subspaces times a factor of two. In this sense, each such matrix
is determined by its column space. We have already seen that these 9 points of
$\PP^2$ form a certain projective configuration (Figure \ref{hesse}).  It is a
classical fact that any set of 9 points in this configuration are the
inflections points of a plane cubic.  Indeed, our configuration is precisely the
inflection points of $x^3+y^3+z^3=0$. Equivalently, it is determined as the
zeros of $x^3+y^3+z^3=0$ and $xyz=0$.

The Waring decomposition presented here was derived from a numerical
decomposition given in \cite{poly17}. I would like to thank Grey Ballard for his
work transforming that numerical decomposition into a sparse numerical one.

\end{document}